\newtheorem{theorem}{Theorem}[section]
\newtheorem{corollary}[theorem]{Corollary}
\theoremstyle{definition}
\theoremstyle{remark}
\newtheorem{remark}[theorem]{Remark}
\numberwithin{equation}{section}
\begin{document}
\title{On Borel Summability and Analytic Functionals}
\author{Ricardo Estrada}
\address{Department of Mathematics, Louisiana State University, Baton Rouge, LA 70803, USA.}
\email{restrada@math.lsu.edu}
\author{Jasson Vindas}
\address{Department of Mathematics\\ Ghent University\\ Krijgslaan 281 Gebouw S22\\ B 9000 Gent\\ Belgium}
\email{jvindas@cage.Ugent.be}
\thanks{R. Estrada gratefully acknowledges support from NSF, through grant number
0968448}
\thanks{J. Vindas gratefully acknowledges support by a Postdoctoral Fellowship of the Research Foundation--Flanders (FWO, Belgium)}
\subjclass[2010]{Primary 30D15, 40G10, 46F15}
\keywords{analytic functionals; Borel summability; entire functions of exponential type; Borel polygon; Silva tempered ultradistributions}
\begin{abstract}
We show that a formal power series has positive radius of convergence if and only if it is uniformly Borel summable over a circle with center at the origin. Consequently, we obtain that an entire function $f$ is of exponential type if and only if the formal power series $\sum_{n=0}^{\infty}f^{(n)}(0)z^{n}$ is uniformly Borel summable over a circle centered a the origin. We apply these results to obtain a characterization of those Silva tempered ultradistributions which are analytic functionals. We also use Borel summability to represent analytic functionals as Borel sums of their moment Taylor series over the Borel polygon.  
\end{abstract}

\maketitle
\section{Introduction}
\label{introBorel}
The aim of this note is to characterize those entire functions $f$ which are of exponential type in terms of the Borel summability of the formal power series

\begin{equation}
\label{Boreleq1}
\sum_{n=0}^{\infty}f^{(n)}(0)z^{n}\ .
\end{equation}

We first prove that a formal power series has positive radius of convergence if and only if it is uniformly $(\mathrm{B}')$ summable over some circle with center at the origin. Observe that this is the converse to Borel's classical theorem \cite{borel,hardy,korevaarbook}: If a formal power series has positive radius of convergence, then it is uniformly Borel summable on compacts inside the disk of convergence. Apparently, such a converse result has not been given elsewhere before. 

We then obtain the desired characterization for entire functions of exponential type, we show that if (\ref{Boreleq1}) is uniformly $(\mathrm{B}')$ summable over some circle centered at the origin, then $f$ is of exponential type. This characterization yields a characterization of those Silva tempered ultradistributions \cite{hoskins-pinto} which are analytic functionals, that is, elements of $\mathcal{O}'(\mathbb{C})$, the dual of the space of entire functions \cite{morimoto}. These results will be presented in Section \ref{Borel} below. In Section \ref{Borelpolygon}, we make some comments about the representation of analytic functionals by the Borel sums of the moment Taylor series at infinity associated to the functional. 

\section{Preliminaries}
\label{preliBorel}
A series $\sum_{n=0}^{\infty}a_{n}$ is said \cite{borel,hardy,korevaarbook} to be summable to a number $\beta$ by the Borel method $(\mathrm{B}')$ if the power series $f(t)=\sum_{n=0}^{\infty}(a_n/n!)t^{n}$ is convergent for any $t$ and
\begin{equation*}
%\label{preliBoreleq1}
\int_{0}^{\infty^{-}}e^{-t}f(t)\mathrm{d}t=\lim_{\lambda\to\infty}\int_{0}^{\lambda}e^{-t}f(t)\mathrm{d}t=\beta\ ,
\end{equation*}
in such a case one writes
\begin{equation*}
%\label{preliBoreleq2}
\sum_{n=0}^{\infty}a_{n}=\beta \ \ \ (\mathrm{B}')\ .
\end{equation*}
Suppose that the formal power series 
\begin{equation}
\label{preliBoreleq3}
\sum_{n=0}^{\infty}a_{n}z^{n} 
\end{equation}
is $(\mathrm{B}')$ summable at a point $z_{0}\in\mathbb{C}$, then \cite[Thm. 130]{hardy} it is uniformly $(\mathrm{B}')$ summable on the line segment $[0,z_{0}]$ and \cite[Thm. 132]{hardy} its sum extends to an analytic function inside the circle with diameter $[0,z_{0}]$, that is, the disk $\Delta_{z_{0}}:=\left|z-z_{0}/2\right|<\left|z_{0}\right|/2$. Explicitly, the analytic extension is given by
\begin{equation*}
%\label{preliBoreleq4}
b_{z_{0}}(z)=\frac{z_{0}}{z}\int_{0}^{\infty^{-}}e^{-\frac{z_{0}}{z}t}f(z_{0}t)\mathrm{d}t\ ,
\end{equation*}
with $f$ as before $f(t)=\sum_{n=0}^{\infty}(a_n/n!)t^{n}$, so that 
\begin{equation*}
%\label{preliBoreleq5}
b_{z_{0}}(z)=\sum_{n=0}^{\infty}a_{n}z^{n}\ \ \ (\mathrm{B}')\ , \ \ z\in(0,z_{0})\ .
\end{equation*}
In general, it is not true that the formal power series (\ref{preliBoreleq3}) should be $(\mathrm{B}')$ at any other point of $\Delta_{z_{0}}$; indeed, consider $a_{n}=\sum_{k=0}^{\infty}(-1)^{k}k^{n}/k!$, then (\ref{preliBoreleq3}) is $(\mathrm{B}')$ summable on the real axis but not at any other point of the half-plane $\Re e\: z>0$ \cite[p. 190]{hardy}. On the other hand, different $b_{z_{0}}$ may lead to different analytic functions in the regions of summability, an example of this can be found in \cite[p. 189]{hardy}. 

We shall use some concepts from the theory of Silva tempered ultradistributions \cite{hoskins-pinto,ssilva1,ssilva2}, for which we follow the notation from \cite{hoskins-pinto}. For analytic functionals, we use the notation from \cite{morimoto}. Let $\mathcal{U}(\mathbb{C})$ be the space of entire functions which are rapidly decreasing on horizontal strips, that is, $\varphi\in\mathcal{U}(\mathbb{C})$ if
\begin{equation}
\label{preliBoreleq6}
\left\|\varphi\right\|_{k}:=\operatorname*{sup}_{\left|\Im m\:s\right|\leq k}(1+\left|s\right|^{k})\left|\varphi(s)\right|<\infty\ .
\end{equation}
The seminorms (\ref{preliBoreleq6}) give to $\mathcal{U}(\mathbb{C})$ a Fr\'{e}chet space structure. The dual space $\mathcal{U}'(\mathbb{C})$ is called the space of Silva tempered ultradistributions. 

Every Silva tempered ultradistribution admits an analytic representation in the following sense; given $g\in\mathcal{U}'(\mathbb{C})$ there exists $G$ analytic in some region of the form $\left|\Im m\:s\right| >c>0$ and having at most polynomial growth such that
\begin{equation*}
%\label{preliBoreleq7}
\left\langle g,\varphi\right\rangle= -\oint_{\Gamma_{c'}}G(s)\varphi(s)\mathrm{d}s\ , \ \ \forall \varphi\in\mathcal{U}(\mathbb{C})\ ,
\end{equation*}
where $\Gamma_{c'}$ is the counterclockwise oriented boundary of the strip $\left|\Im m\:s\right| \leq c'$, where $c'>c$. Analytic representations are unique modulo polynomials. 

The functional $g\in\mathcal{U}'(\mathbb{C})$ is said to vanish on a \emph{real} open interval $I\subset\mathbb{R}$ if $G$ has an analytic continuation on the vertical strip $\Re e\:s\in I$. The \emph{real support} of a Silva tempered ultradistribution $g$ is then the complement (in $\mathbb{R}$) of the largest open set on which it vanishes, we denote it by $\operatorname*{supp}_{\mathbb{R}}g$. The subspace of Silva tempered ultradistributions with compact real support coincides (under a natural identification) with the space of analytic functionals $\mathcal{O}'(\mathbb{C})$. Any element of $\mathcal{O}'(\mathbb{C})$ can be realized in a unique manner as the germ of a function which is analytic outside a disk and vanishes at $\infty$.

Finally, we remark that $\mathcal{U}'(\mathbb{C})$ is isomorphic, via Fourier transform, to $\mathcal{K}'_{1}(\mathbb{R})$, the space of distributions of exponential type, that is, the ones which are derivatives of continuous functions that grow not faster than $e^{\alpha\left|t\right|}$, for some $\alpha\in\mathbb{R}$. We fix the constants in the Fourier transform so that
\begin{equation*}
%\label{preliBorel8}
\hat{\phi}(u)=\int_{-\infty}^{\infty}e^{iut}\phi(t)\mathrm{d}t\ ,
\end{equation*}
on test functions.

\section{On Borel Summability}
\label{Borel}
Our main result is the following theorem.

\begin{theorem}
\label{Borelth1}
Let $\sum_{n=0}^{\infty}a_{n}z^{n}$ be a formal power series. If for some $r>0$ the series $\sum_{n=0}^{\infty}a_{n}r^{n}e^{in\theta}$ is uniformly $(\mathrm{B}')$ summable for $\theta\in[0,2\pi]$, then $\sum_{n=0}^{\infty}a_{n}z^{n}$ has positive radius of convergence $R\geq r$.
\end{theorem}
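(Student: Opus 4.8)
The plan is to recover the coefficients $a_{m}$ as the Fourier coefficients of the $(\mathrm{B}')$-limit function over the circle $|z|=r$, and then to read off a Cauchy-type bound $|a_{m}|\le C\,r^{-m}$.

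First I would fix notation: put $f(t)=\sum_{n=0}^{\infty}(a_{n}/n!)t^{n}$, which must be an entire function (otherwise $\sum_{n} a_{n}r^{n}e^{in\theta}$ could not be $(\mathrm{B}')$ summable for any $\theta$), and for $\lambda>0$, $\theta\in[0,2\pi]$ set $g_{\lambda}(\theta)=\int_{0}^{\lambda}e^{-t}f(re^{i\theta}t)\,\mathrm{d}t$. The hypothesis says exactly that $g_{\lambda}\to\beta$ uniformly on $[0,2\pi]$ as $\lambda\to\infty$, where $\beta(\theta)$ is the $(\mathrm{B}')$ sum of $\sum_{n}a_{n}r^{n}e^{in\theta}$. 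Since the convergence is uniform and each $g_{\lambda}$ is continuous, hence bounded, on the compact interval $[0,2\pi]$, the limit $\beta$ is bounded there; write $M=\sup_{\theta}|\beta(\theta)|$.

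Next I would compute the Fourier coefficients of $g_{\lambda}$. Because $f$ is entire, the series $\sum_{n}(a_{n}/n!)(rt)^{n}e^{in\theta}$ converges uniformly for $(t,\theta)$ in the compact set $[0,\lambda]\times[0,2\pi]$, so I may integrate it term by term against $e^{-t}\,\mathrm{d}t$ and then against $e^{-im\theta}\,\mathrm{d}\theta$; this yields
\[
\frac{1}{2\pi}\int_{0}^{2\pi}g_{\lambda}(\theta)e^{-im\theta}\,\mathrm{d}\theta=\frac{a_{m}r^{m}}{m!}\,\gamma(m+1,\lambda)\quad(m\ge 0),
\]
and $0$ for $m<0$, where $\gamma(m+1,\lambda)=\int_{0}^{\lambda}t^{m}e^{-t}\,\mathrm{d}t$ is the lower incomplete gamma function. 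Letting $\lambda\to\infty$ and using that $g_{\lambda}\to\beta$ uniformly (so the Fourier coefficients converge) together with $\gamma(m+1,\lambda)\to m!$, I obtain that the $m$-th Fourier coefficient of $\beta$ equals $a_{m}r^{m}$ for $m\ge 0$. Finally, $|a_{m}|r^{m}=\bigl|\widehat{\beta}(m)\bigr|\le M$ for all $m\ge 0$, hence $\limsup_{m}|a_{m}|^{1/m}\le 1/r$, which is precisely $R\ge r$.

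The only non-routine point is the interchange of the limit $\lambda\to\infty$ with the Fourier integral $\tfrac{1}{2\pi}\int_{0}^{2\pi}(\,\cdot\,)e^{-im\theta}\,\mathrm{d}\theta$; this is exactly where \emph{uniform} $(\mathrm{B}')$ summability over the circle is essential, as mere pointwise summability at each $\theta$ would not license passing the limit inside. Everything else — the termwise integration producing the incomplete gamma functions, and the final coefficient estimate — is standard, with the entirety of $f$ being what makes the termwise integration legitimate.
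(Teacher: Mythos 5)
Your argument is correct, and it takes a genuinely different and considerably more elementary route than the paper's. You work directly on the circle: the partial Borel integrals $g_{\lambda}(\theta)=\int_{0}^{\lambda}e^{-t}f(re^{i\theta}t)\,\mathrm{d}t$ are continuous, their uniform limit $\beta$ is bounded, and since $f$ is entire (forced already by $(\mathrm{B}')$ summability at a single point of the circle) the termwise integration over $[0,\lambda]\times[0,2\pi]$ is legitimate and gives $\widehat{g_{\lambda}}(m)=\tfrac{a_{m}r^{m}}{m!}\int_{0}^{\lambda}t^{m}e^{-t}\,\mathrm{d}t$; uniform convergence lets you pass to the limit in the Fourier integral, so $\widehat{\beta}(m)=a_{m}r^{m}$ and the trivial bound $|\widehat{\beta}(m)|\leq\sup|\beta|$ yields the Cauchy estimate and $R\geq r$. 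The paper instead glues the Borel sums $b_{\theta}$ into a single analytic function $b$ on the punctured disk via a contour-rotation and Ces\`{a}ro argument, shows $zb(z)$ is bounded near the origin so that $b\in\mathcal{O}(\mathbb{D})$, and then moves through the Fourier transform into the space $\mathcal{U}'(\mathbb{C})$ of Silva tempered ultradistributions, invoking a Paley--Wiener--type theorem to conclude that $f$ has exponential type and that the coefficient series converges. Your proof is shorter and self-contained, using only uniform convergence and orthogonality; what the paper's longer route buys is precisely the auxiliary objects built along the way --- the glued Borel sum $b$ and the analytic representation $(i/s)b(i/s)$ of $\hat{f}$ --- which are what get reused in Corollary \ref{Borelc2} and in the Borel-polygon representation of Section \ref{Borelpolygon}. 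The one step you rightly flag as non-routine, interchanging $\lambda\to\infty$ with the integral in $\theta$, is exactly where the uniformity hypothesis enters, and your justification of it is sound.
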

\begin{proof} Set $f(t)=\sum_{n=0}^{\infty}a_{n}t^{n}/n!$. We may assume that $r=1$. We divide the proof into three steps.
\smallskip

\emph{First step.} We shall first show that each function
$$
b_{\theta}(z):=\frac{e^{i\theta}}{z}\int_{0}^{\infty^{-}}e^{-\frac{e^{i\theta}}{z}t}f(e^{i\theta}t)\mathrm{d}t\ , \ \ \theta\in[0,2\pi)\ ,
$$
analytic on the disk $\Delta_{\theta}:=\left|z-e^{i\theta}/2\right|<1/2$, can be analytically continued to $\mathbb{D}\setminus\left\{0\right\}:=\left|z\right|<1,\ z\neq0$. Furthermore, we show that there exists $b\in\mathcal{O}(\mathbb{D}\setminus\left\{0\right\})$ such that $b(z)=b_{\theta}(z)$ for all $\theta$ on their corresponding domains. 

Set $b(z)=b_{0}(z)$ on $\Delta_{0}$. Fix $\theta\in(0,\pi/2)$. We shall prove that $b(z)=b_{\theta}(z)$ on $\Delta_{0}\cap\Delta_{\theta}$. For this purpose, it is enough to show that $b(x)=b_{\theta}(x)$ for  $x\in\Delta_{0}\cap\Delta_{\theta}\cap\mathbb{R}=(0,\cos \theta)$. Thus, we must show that 
$$
xb_{\theta}(x)=e^{i\theta}\int_{0}^{\infty^{-}}e^{-\frac{e^{i\theta}}{x}t}f(e^{i\theta}t)\mathrm{d}t=\int_{0}^{\infty^{-}}e^{-\frac{t}{x}}f(t)\mathrm{d}t=xb(x)\ ,
$$
for $0<x<\cos\theta$. The Cauchy theorem applied to the contour given by the union of the line segments $[0,\lambda]$, $[0,\lambda e^{i\theta}]$
and the circular arc $\lambda e^{i\omega}$, $\omega\in[0,\theta]$, gives us
$$
\lim_{\lambda\to\infty}i\lambda \int_{0}^{\theta}e^{-\frac{\lambda}{x}e^{i\omega}}e^{i\omega}f(\lambda e^{i\omega})\mathrm{d}\omega=xb_{\theta}(x)-xb(x)\ .
$$
Setting
$$
\psi(\lambda)=\lambda\int_{0}^{\theta}e^{-\frac{\lambda}{x}e^{i\omega}}e^{i\omega}f(\lambda e^{i\omega})\mathrm{d}\omega,
$$
we now have to show that $\lim_{\lambda\to\infty}\psi(\lambda)=0$. It is easier if we calculate this limit (which a priori we know its existence!) in the Ces\`{a}ro sense. Set
$$
M_{\omega}(t)=\int_{0}^{t}e^{-u}f(e^{i\omega}u)\mathrm{d}u\ , \ \ \omega\in[0,\theta]\ ,
$$
since $\sum_{n=0}^{\infty}a_{n}e^{in\omega}$ is uniformly $(\mathrm{B}')$ summable, then $M_{\omega}(t)=O(1)$, as $t\to\infty$ uniformly in $\omega$. Thus, as $\lambda\to\infty$,
\begin{align*}
\left|\frac{1}{\lambda}\int_{0}^{\lambda}\psi(t)\mathrm{d}t\right|&
=\frac{1}{\lambda}\left|\int_{0}^{\theta}e^{iw}\int_{0}^{\lambda}te^{-\frac{t}{x}e^{i\omega}}f(e^{i\omega}t)\mathrm{d}t\mathrm{d}\omega\right|
\\
&
\leq\int_{0}^{\theta}e^{-\lambda\left( \frac{\cos \omega}{x}-1\right)}\left|M_{\omega}(\lambda)\right|\mathrm{d}\omega
\\
&
\ \ \ \ \ \ 
+\frac{1}{\lambda}\int_{0}^{\theta}\left|\int_{0}^{\lambda}\left(te^{-t\left(\frac{e^{i\omega}}{x}-1\right)}\right)'M_{\omega}(t)\mathrm{d}t\right|\mathrm{d}\omega
\\
&
=O\left(e^{-\lambda\left(\frac{\cos \theta}{x}-1\right)}\right)+O\left(\frac{1}{\lambda}\right)=o(1)\ .
\end{align*}
Therefore, $\lim_{\lambda\to\infty}\psi(\lambda)=0\ $ $(\mathrm{C},1)$, but since the ordinary limit coincides with the Ces\`{a}ro one, we get $b(x)=b_{\theta}(x)$, for $0<x<\cos\theta$. 

We have shown that $b(z):=b_\theta(z)$ for $z\in\Delta_{\theta}$, $0\leq\theta<\pi/2$, is well defined. Moreover, it provides us the desired analytic continuation beyond the first quadrant. Reasoning in the same way, we continue $b(z)$ to $\mathbb{D}\setminus\left\{0\right\}$ as an analytic function. 

\smallskip

\emph{Second step.} We now show that $b$ is actually analytic at the origin.

We first prove that $zb(z)\in\mathcal{O}(\mathbb{D})$. For that, it is enough to prove that $zb(z)$ is bounded in the set $\left\{z\neq0:\left|z\right|<1/2\right\}$. Let 

$$\Omega_{\theta}=\left\{z:  -\theta-\frac{\pi}{4}<\arg \left(\frac{1}{z}\right)<-\theta+\frac{\pi}{4} \right\}\cap\left\{z:\left|z-\frac{e^{i\theta}}{4}\right|<1/4\right\}\ ,$$
then $\Omega_{\theta}\subset\Delta_{\theta}$ and  $\left\{z\neq0:\left|z\right|<1/2\right\}=\bigcup\Omega_{\theta}$. Take now $z\in\Omega_{\theta}$, write $s=e^{i\theta}/z$, then $\Re e\: s>2$, $-\pi/4<\arg s<\pi/4$, and

\begin{align*}
\left|zb(z)\right|&=\left|zb_{\theta}(z)\right|=\left|\int_{0}^{\infty^{-}}e^{-st}f(e^{i\theta}t)\mathrm{d}t\right|
\\
&
\leq\left|s-1\right|\int_{0}^{\infty}e^{-t(\Re e\:s-1)}\left|M_{\theta}(t)\right|\mathrm{d}t
\\
&
\leq
\sec (\arg (s-1))\max_{t,\:\theta}\left|M_{\theta}(t)\right|
\\
&
\leq
\sqrt{5}\max_{t,\:\theta}\left|M_{\theta}(t)\right|\ .
\end{align*}
Consequently, $zb(z)$ is uniformly bounded for $\left|z\right|<1/2$, $z\neq0$; and so $zb(z)\in\mathcal{O}(\mathbb{D})$. 

Observe now that either $b$ is analytic at the origin or it has a pole of order 1. If $b$ has a pole, then $b(x)\sim A/x$, as $x\to0^{+}$ on the real axis, for some constant $A$. However, it is impossible because of \cite[Thm. 130]{hardy}; indeed, $\sum_{n=0}^{\infty}a_{n}x$ is uniformly $(\mathrm{B}')$ summable on $[0,1]$ which shows that $\lim_{x\to0^{+}}b(x)=a_{0}$. Thus, we have shown that $b\in\mathcal{O}(\mathbb{D})$.

\smallskip

\emph{Third step.} We now use the analyticity of $b$ to show that $f$ is of exponential type. Based on this fact we will show that the power series has a positive radius of convergence.

Observe that the $(\mathrm{B}')$ summability of the two series $\sum_{n=0}^{\infty}a_{n}$ and $\sum_{n=0}^{\infty}(-1)^{n}a_{n}$ gives us at once that $f\in\mathcal{K}'_{1}(\mathbb{R})$. We now calculate an explicit analytic representation of its Fourier transform, $\hat{f}\in\mathcal{U}'(\mathbb{C})$. It is not hard to see \cite{hoskins-pinto,ssilva1} that
$$
F(s)=\left\{ \begin{array}{ll}
\displaystyle\int_{0}^{\infty^{-}}e^{ist}f(t)\mathrm{d}t, & \Im m\:s>1\ ,\\
\displaystyle-\int^{0}_{-\infty^{+}}e^{ist}f(t)\mathrm{d}t, & \Im m\: s<-1\ ,\\
\end{array}\right.
$$
works. Notice that, for $\Im m\:s>1$,
$$
F(s)=\frac{i}{s}\:b_{0}\left(\frac{i}{s}\right)=\frac{i}{s}\:b\left(\frac{i}{s}\right)\ ,
$$
and, for $\Im m\:s<-1$,
$$
F(s)=\frac{i}{s}\:b_{\pi}\left(\frac{i}{s}\right)=\frac{i}{s}\:b\left(\frac{i}{s}\right)\ .
$$

Therefore, $(i/s)b(i/s)$ is an analytic continuation of $F(s)$ to $\mathbb{C}\setminus\overline{\mathbb{D}}$. So, the real support of the Silva tempered ultradistribution $\hat{f}$ is compact, $\operatorname*{supp}_{\mathbb{R}}\hat{f}\subseteq[-1,1]$; consequently, $\hat{f}\in\mathcal{O}'(\mathbb{C})$. The Caley-Wiener-Gel'fand representation theorem \cite[Thm. 4.21, p. 175]{hoskins-pinto} implies now that $f$ is an entire function of exponential type (with type less or equal to 1); actually, we obtain for free from the same theorem the realization of $\hat{f}$ as the Taylor series at infinity
$$
\sum_{n=0}^{\infty}\frac{a_{n}i^{n+1}}{s^{n+1}}\ ,
$$ 
convergent on $\mathbb{C}\setminus\overline{\mathbb{D}}$. Hence, $\sum_{n=0}^{\infty}a_{n}z^{n}$ has positive radius of convergence greater or equal to 1. This completes the proof.
\end{proof}

We immediately obtain the following two corollaries.

\begin{corollary}
\label{Borelc1}
Let $f\in\mathcal{O}(\mathbb{C})$. Then $f$ is of exponential type if and only if there exists $r>0$ such that $\sum_{n=0}^{\infty}f^{(n)}(0)r^{n}e^{in\theta}$ is uniformly $(\mathrm{B}')$ summable for $\theta\in[0,2\pi]$. In such a case, the type of $f$ is less or equal to $r^{-1}$.
\end{corollary}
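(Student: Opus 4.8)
The plan is to reduce everything to Theorem~\ref{Borelth1}, to Borel's classical theorem (as recalled in the introduction), and to the standard Cauchy--Hadamard description of entire functions of exponential type. First I would fix the notation $a_n=f^{(n)}(0)$, so that the Taylor expansion $f(z)=\sum_{n=0}^{\infty}(a_n/n!)z^n$ converges on all of $\mathbb{C}$ and the function associated with the formal power series $\sum_{n=0}^{\infty}a_n z^n$ in the Borel method is precisely $f$ itself. Next I would invoke the classical fact that an entire function $\sum_{n=0}^{\infty}c_n z^n$ is of exponential type $\leq\tau$ if and only if $\limsup_{n\to\infty}(n!\,|c_n|)^{1/n}\leq\tau$; applied with $c_n=a_n/n!$ this says that the type of $f$ equals $\limsup_{n\to\infty}|a_n|^{1/n}$, which by Cauchy--Hadamard is $R^{-1}$, where $R\in(0,\infty]$ denotes the radius of convergence of $\sum_{n=0}^{\infty}a_n z^n$ (with the convention $1/\infty:=0$). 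In particular, ``$f$ is of exponential type'' is equivalent to ``$\sum_{n=0}^{\infty}a_n z^n$ has positive radius of convergence'', and in that case the type of $f$ equals $R^{-1}$.

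With this dictionary in place, the \emph{if} part is immediate: if $\sum_{n=0}^{\infty}a_n r^n e^{in\theta}$ is uniformly $(\mathrm{B}')$ summable for $\theta\in[0,2\pi]$, then Theorem~\ref{Borelth1} gives that $\sum_{n=0}^{\infty}a_n z^n$ has radius of convergence $R\geq r$, whence $f$ is of exponential type with type $R^{-1}\leq r^{-1}$.

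For the \emph{only if} part, assume $f$ has exponential type $\tau=R^{-1}<\infty$ with $R>0$, and fix any $r$ with $0<r<R$. Setting $b_n=a_n r^n$, one has $\limsup_{n\to\infty}|b_n|^{1/n}=r/R<1$, so the formal power series $\sum_{n=0}^{\infty}b_n w^n$ has radius of convergence $R/r>1$, and the unit circle $\{w=e^{i\theta}:\theta\in[0,2\pi]\}$ is a compact subset of its disk of convergence. By Borel's classical theorem $\sum_{n=0}^{\infty}b_n w^n$ is uniformly $(\mathrm{B}')$ summable on that circle, which is precisely the statement that $\sum_{n=0}^{\infty}a_n r^n e^{in\theta}$ is uniformly $(\mathrm{B}')$ summable for $\theta\in[0,2\pi]$.

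I do not expect a real obstacle here, since the corollary is essentially a repackaging of Theorem~\ref{Borelth1}. The only points requiring a little care are lining up the two Cauchy--Hadamard computations (for the exponential type of $f$ and for the radius of convergence of $\sum_{n=0}^{\infty}a_n z^n$), keeping track of the convention $1/\infty=0$ in the case $\tau=0$, and, in the \emph{only if} direction, choosing $r$ strictly smaller than $R$ so that the unit circle lands in the \emph{interior} of the disk of convergence of $\sum_{n=0}^{\infty}b_n w^n$, where the uniform form of Borel's theorem applies.
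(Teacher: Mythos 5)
Your proposal is correct and is essentially the deduction the paper intends when it says the corollary is "immediately" obtained: the \emph{if} direction follows from Theorem~\ref{Borelth1} combined with the standard coefficient formula for the exponential type (type of $f=\limsup_n|f^{(n)}(0)|^{1/n}=R^{-1}$), and the \emph{only if} direction from Borel's classical theorem on uniform summability on compacts inside the disk of convergence. The only cosmetic difference is that the paper's proof of Theorem~\ref{Borelth1} already yields the type bound directly via the Paley--Wiener-type argument, whereas you recover it from the conclusion $R\geq r$; both are valid.
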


\begin{corollary}
\label{Borelc2}
A Silva tempered ultradistribution $g\in\mathcal{U}'(\mathbb{C})$ is an analytic functional, i.e., $g\in\mathcal{O}'(\mathbb{C})$, if and only $\hat{g}\in\mathcal{O}(\mathbb{C})$ and there exists $r>0$ such that $\sum_{n=0}^{\infty}\hat{g}^{(n)}(0)r^{n}e^{in\theta}$ is uniformly $(\mathrm{B}')$ summable for $\theta\in[0,2\pi]$. Moreover, the analytic realization of $g$ vanishing at $\infty$ is analytic on $\mathbb{C}\setminus(r^{-1}\overline{\mathbb{D}})$.
\end{corollary}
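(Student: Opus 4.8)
The strategy is to read Corollary~\ref{Borelc2} off from Corollary~\ref{Borelc1} together with the Fourier-transform picture and the Caley-Wiener-Gel'fand theorem already used in the third step of the proof of Theorem~\ref{Borelth1}. Fix $g\in\mathcal{U}'(\mathbb{C})$. Under the Fourier isomorphism $\mathcal{U}'(\mathbb{C})\cong\mathcal{K}'_{1}(\mathbb{R})$, $\hat{g}$ is a priori a distribution of exponential type, and the hypothesis $\hat{g}\in\mathcal{O}(\mathbb{C})$ says precisely that it is (the restriction to $\mathbb{R}$ of) an entire function; in that case write $a_{n}=\hat{g}^{(n)}(0)$, so $\hat{g}(z)=\sum_{n=0}^{\infty}a_{n}z^{n}/n!$. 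Applying the Fourier transform once more reproduces $g$ up to the reflection $t\mapsto -t$ and a nonzero constant, so the real support of $g$ coincides, up to that reflection, with the real support of $\widehat{\hat{g}}$.

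I would then combine two facts. First, the Caley-Wiener-Gel'fand representation theorem \cite[Thm. 4.21, p. 175]{hoskins-pinto}, in the form already exploited in the proof of Theorem~\ref{Borelth1}: the ultradistribution $g$ has compact real support contained in $[-\tau,\tau]$ -- equivalently, $g\in\mathcal{O}'(\mathbb{C})$ and its unique realization as a function analytic outside a disk and vanishing at $\infty$ is in fact analytic on $\mathbb{C}\setminus\tau\overline{\mathbb{D}}$ -- if and only if $\hat{g}$ is an entire function of exponential type $\leq\tau$. For $\tau=1$ this is exactly the passage in that proof where $F(s)=(i/s)\,b(i/s)$ is shown to belong to $\mathcal{O}(\mathbb{C}\setminus\overline{\mathbb{D}})$; the general $\tau$ follows from it by the rescaling $a_{n}\mapsto a_{n}r^{n}$ with $\tau=r^{-1}$. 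Second, Corollary~\ref{Borelc1} applied to the entire function $\hat{g}$: $\hat{g}$ is of exponential type if and only if there is $r>0$ such that $\sum_{n=0}^{\infty}a_{n}r^{n}e^{in\theta}$ is uniformly $(\mathrm{B}')$ summable for $\theta\in[0,2\pi]$, and then the type is $\leq r^{-1}$.

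Chaining these yields the corollary. If $g\in\mathcal{O}'(\mathbb{C})$, then $g$ has compact real support, so Caley-Wiener-Gel'fand gives that $\hat{g}$ is an entire function of finite exponential type, and Corollary~\ref{Borelc1} then produces the required $r>0$. Conversely, if $\hat{g}\in\mathcal{O}(\mathbb{C})$ and such an $r>0$ exists, Corollary~\ref{Borelc1} makes $\hat{g}$ of exponential type $\leq r^{-1}$; by Caley-Wiener-Gel'fand, $g$ then has real support inside $[-r^{-1},r^{-1}]$, hence $g\in\mathcal{O}'(\mathbb{C})$, and the realization of $g$ vanishing at $\infty$ is analytic on $\mathbb{C}\setminus(r^{-1}\overline{\mathbb{D}})$, which is the asserted ``moreover''.

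I do not expect a genuine obstacle here, since all the analytic content is already contained in Theorem~\ref{Borelth1} and Corollary~\ref{Borelc1}. The only points requiring care are bookkeeping ones: pinning down the Fourier normalization so that the parameter $r$ and the type bound $r^{-1}$ match on both sides (the reflection $t\mapsto -t$ relating $g$ to $\widehat{\hat{g}}$ is harmless because $[-r^{-1},r^{-1}]$ and $r^{-1}\overline{\mathbb{D}}$ are symmetric), and verifying that the realization of $g$ vanishing at $\infty$ is precisely the analytic continuation built in the second and third steps of the proof of Theorem~\ref{Borelth1}, now read on the disk of radius $r^{-1}$.
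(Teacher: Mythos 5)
Your proposal is correct and follows essentially the route the paper intends: the paper states Corollary~\ref{Borelc2} as an immediate consequence of Corollary~\ref{Borelc1} combined with the Fourier isomorphism $\mathcal{U}'(\mathbb{C})\cong\mathcal{K}'_{1}(\mathbb{R})$, the Caley--Wiener--Gel'fand theorem, and the explicit realization $(i/s)\,b(i/s)$ constructed in the third step of the proof of Theorem~\ref{Borelth1}, which is exactly the chain of reductions you carry out. Your closing remarks on the rescaling $a_{n}\mapsto a_{n}r^{n}$, the harmless reflection $t\mapsto -t$, and the identification of the realization vanishing at $\infty$ are precisely the bookkeeping points the paper leaves implicit.
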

\section{On Analytic Functionals and the Borel Polygon}
\label{Borelpolygon}

In connection to Corollary \ref{Borelc2}, it seems appropriate to make some comments about analytic representations of analytic functionals and Borel polygons.

Given $g\in\mathcal{O}'(\mathbb{C})$, it can always be represented as a series of multipoles \cite{ssilva2},

\begin{equation*}
%\label{Borelpolygoneq1}
g=\sum_{n=0}^{\infty}\frac{(-1)^{n}\mu_{n}}{n!}\:\delta^{(n)}\ ,
\end{equation*}
where $\mu_{n}=\mu_{n}(g):=\left\langle g,s^{n}\right\rangle$ are the moments of $g$ and $\delta$ is the Dirac delta. In addition, we have that $b(z)=\sum_{n=0}^{\infty}\mu_{n}z^{n}$ has positive radius of convergence; we use the notation $\Pi(b)$ for the usual Borel polygon \cite{borel,hardy,korevaarbook} (with base point 0) of $b$. We define the \emph{Borel polygon of the analytic functional} $g$ as

\begin{equation*}
%\label{Borelpolygoneq2}
\Pi(g)=\left\{s\in\mathbb{C}:\frac{1}{s}\in\Pi(b)\right\}\ .
\end{equation*}
Then, it is clear from the usual result for the Borel polygon of power series \cite[Thm. 4.3, p. 286]{korevaarbook} that the formal series

$$-\frac{1}{2\pi i} \sum_{n=0}^{\infty}\frac{\mu_{n}}{s^{n+1}}$$
is $(\mathrm{B'})$ summable on $\operatorname*{int}\Pi(g)$, uniformly $(\mathrm{B'})$ summable on any closed subset which is away from the boundary of $\Pi(g)$, and not $(\mathrm{B'})$ summable outside the closure of $\Pi(g)$; moreover, the Borel sum is analytic on $\operatorname*{int}\Pi(g)\cup\left\{\infty\right\}$. It is also easy to verify that $\mathbb{C}\setminus\operatorname*{int}{\Pi(g)}$ is simply connected and compact. Hence, by the Cauchy theorem, we obtain the ensuing theorem.

\begin{theorem}
Let $g\in\mathcal{O}'(\mathbb{C})$. Then 
\begin{equation}
\label{Borelpolygoneq3}
G(s)=-\frac{1}{2\pi i} \sum_{n=0}^{\infty}\frac{\mu_{n}}{s^{n+1}}\ \ \  (\mathrm{B}')
\end{equation}
is analytic in the interior of $\Pi(g)$ and for any $\varphi\in\mathcal{O}(\mathbb{C})$ we have
\begin{equation}
\label{Borelpolygoneq4}
\left\langle g,\varphi\right\rangle=-\oint_{\mathsf{C}} G(s)\varphi(s)\mathrm{d}s\ ,
\end{equation}
where $\mathsf{C}$ is any contour contained in the interior of the Borel polygon $\Pi(g)$ which winds once around $\mathbb{C}\setminus\operatorname*{int}{\Pi(g)}$.
\end{theorem}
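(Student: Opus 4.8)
The first assertion is essentially contained in the discussion preceding the statement: the Borel sum of the series in (\ref{Borelpolygoneq3}) is analytic on $\operatorname{int}\Pi(g)\cup\{\infty\}$, so $G$ is holomorphic on $\operatorname{int}\Pi(g)$; and on the neighbourhood $\{\,|s|>1/R\,\}$ of $\infty$, where $R>0$ is the radius of convergence of $b(z)=\sum_{n=0}^{\infty}\mu_{n}z^{n}$, the series (\ref{Borelpolygoneq3}) converges in the ordinary sense, so there $G(s)=-\frac{1}{2\pi i}\,\frac{b(1/s)}{s}$, which vanishes at $\infty$ and is represented by the convergent Laurent series $-\frac{1}{2\pi i}\sum_{n}\mu_{n}s^{-n-1}$. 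That is all I would say about the analyticity claim.

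For (\ref{Borelpolygoneq4}) the plan is to prove the identity first for one distinguished contour and then deform it. I would take $C_{0}=\{\,|s|=\rho\,\}$, oriented counterclockwise, with $\rho>1/R$. Since the Laurent series for $G$ converges uniformly on $C_{0}$ and $\varphi$ is bounded there, term-by-term integration is licit, and $\frac{1}{2\pi i}\oint_{C_{0}}\varphi(s)s^{-n-1}\,\mathrm{d}s=\varphi^{(n)}(0)/n!$ gives
$$
-\oint_{C_{0}}G(s)\varphi(s)\,\mathrm{d}s=\sum_{n=0}^{\infty}\frac{\mu_{n}}{n!}\,\varphi^{(n)}(0)\ .
$$
On the other hand, pairing the multipole representation $g=\sum_{n}\frac{(-1)^{n}\mu_{n}}{n!}\delta^{(n)}$ with $\varphi$ and using $\langle\delta^{(n)},\varphi\rangle=(-1)^{n}\varphi^{(n)}(0)$ produces exactly the same series; it converges absolutely because $\limsup_{n}|\mu_{n}|^{1/n}=1/R$ while $\limsup_{n}|\varphi^{(n)}(0)/n!|^{1/n}=0$ (as $\varphi$ is entire), which also justifies the interchanges above. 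Hence $\langle g,\varphi\rangle=-\oint_{C_{0}}G(s)\varphi(s)\,\mathrm{d}s$.

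The last step is to replace $C_{0}$ by an arbitrary admissible $\mathsf{C}$. As $\varphi$ is entire and $G\in\mathcal{O}(\operatorname{int}\Pi(g))$, the product $G\varphi$ is holomorphic on all of $\operatorname{int}\Pi(g)$. Using the facts recorded just before the theorem — $\mathbb{C}\setminus\operatorname{int}\Pi(g)$ is compact and simply connected, in particular connected — the set $\operatorname{int}\Pi(g)\cup\{\infty\}$ is a simply connected domain of the Riemann sphere, so $\operatorname{int}\Pi(g)$ is, up to conformal equivalence, a punctured disk whose fundamental group is generated by a small loop about $\infty$, and such a loop is freely homotopic inside $\operatorname{int}\Pi(g)$ to $C_{0}$. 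Consequently any contour $\mathsf{C}\subset\operatorname{int}\Pi(g)$ winding once around the compact set $\mathbb{C}\setminus\operatorname{int}\Pi(g)$ lies in the same free homotopy class as $C_{0}$, and homotopy invariance of the integral of the holomorphic function $G\varphi$ gives $\oint_{\mathsf{C}}G\varphi=\oint_{C_{0}}G\varphi$. Combined with the previous paragraph this yields (\ref{Borelpolygoneq4}).

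I expect the only delicate point to be the identification used in the middle step: that the Laurent coefficients at $\infty$ of the analytic realization of $g$ are precisely the moments $\mu_{n}$ — equivalently, that $g$ is represented by its multipole series, the cited result from \cite{ssilva2} — together with the accompanying absolute summability that makes the term-by-term integration legitimate. The contour deformation in the final step is then routine once one grants the simple connectivity of $\mathbb{C}\setminus\operatorname{int}\Pi(g)$.
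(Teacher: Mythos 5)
Your proposal is correct and follows essentially the same route as the paper: analyticity of $G$ on $\operatorname{int}\Pi(g)\cup\{\infty\}$ comes from the classical Borel polygon theorem quoted just before the statement, and \eqref{Borelpolygoneq4} is obtained by Cauchy's theorem (contour deformation) from its validity on a large circle, where the paper leaves the base case implicit and you make it explicit via the multipole series and term-by-term integration of the convergent Laurent expansion. The points you flag as delicate (the multipole representation from \cite{ssilva2} and the simple connectivity of $\mathbb{C}\setminus\operatorname{int}\Pi(g)$) are exactly the ingredients the paper invokes.
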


\begin{remark}
Observe that using \cite[Thm. 2.2.1]{morimoto} we obtain that $g\in\mathcal{O}'(\mathbb{C}\setminus\operatorname*{int}{\Pi(g)})\subset \mathcal{O}'(\mathbb{C})$, where $\mathcal{O}(\mathbb{C}\setminus\operatorname*{int}{\Pi(g)})$ is the space of germs of analytic functions in a neighborhood of $\mathbb{C}\setminus\operatorname*{int}{\Pi(g)}$. The Sebasti\~{a}o e Silva-K\"{o}the-Grothendieck theorem \cite[Thm. 2.13]{morimoto} provides us also with a representation of the form (\ref{Borelpolygoneq4}) for $\varphi\in\mathcal{O}(\mathbb{C}\setminus\operatorname*{int}{\Pi(g)})$ , where $G$ is again given by the Borel sum (\ref{Borelpolygoneq3}).
\end{remark}


\begin{thebibliography}{99}                                                                                             

\bibitem{borel} E. Borel, \textit{Le\c{c}ons sur les S\'{e}ries Divergentes,} Gauthier-Villars, Paris, 1928.

\bibitem {hardy} G. H. Hardy,\textit{ Divergent Series,} Clarendon Press, Oxford, 1949.

\bibitem{hoskins-pinto} R. F. Hoskins, J. Sousa Pinto, \textit{Theories of generalised functions. Distributions, ultradistributions and other generalised functions,} Horwood Publishing Limited, Chichester, 2005.

\bibitem{korevaarbook} J. Korevaar, \textit{Tauberian theory. A century of developments,} Grundlehren der Mathematischen Wissenschaften 329, Springer-Verlag, Berlin, 2004.

\bibitem{morimoto} M. Morimoto, \emph{An introduction to Sato's hyperfunctions,}  Translations of Mathematical Monographs 129,  American Mathematical Society, Providence, RI, 1993.

\bibitem{ssilva1} J. Sebasti\~{a}o e Silva, \textit{Les fonctions analytiques comme ultra-distributions dans le calcul op\'{e}rationnel,} Math. Ann. \textbf{136} (1958), 58--96. 

\bibitem{ssilva2} J. Sebasti\~{a}o e Silva, \textit{Les s\'{e}ries de multip\^{o}les des physiciens et la th\'{e}orie des ultradistributions,} Math. Ann. \textbf{174} (1967), 109--142.
\end{thebibliography}
\end{document}